\documentclass[12pt, oneside,reqno]{amsart}
\usepackage{hyperref}
\usepackage{geometry}
\usepackage{amsmath}
\usepackage{cite}
\usepackage{xcolor}
\usepackage{graphicx}
\usepackage{subfigure}

\usepackage{amsmath}
\allowdisplaybreaks

\newcommand{\rn}{\mathbb R^n}

\newcommand\wtilde[1]{\overset{\lower.4ex\hbox{$\scriptstyle \sim$}}{#1}}

\numberwithin{equation}{section}

\newtheorem{thm}{Theorem}[section]

\newtheorem{lem}[thm]{Lemma}

\newtheorem*{remark*}{Remark}

\newtheorem{rem}[thm]{Remark}

\begin{document}
	\title{$L_p$ Brunn-Minkowski inequality for weighted dual quermassintegrals}

	\author[X. Chen]{Xiaojuan Chen}
	\address{Institute of Mathematics,
		Hunan University,  Changsha,  410082,  China}
	\email{
		cxj@hnu.edu.cn}
	
	\author[S. Tang]{Shengyu Tang}
	\address{Institute of Mathematics,
		Hunan University,  Changsha,  410082,  China}
	\email{
		tsy@hnu.edu.cn}
	
	\author[S. Wang]{Sinan Wang}
	\address{Institute of Mathematics,
		Hunan University,  Changsha,  410082,  China}
	\email{
		wangsinan@hnu.edu.cn}
	\subjclass{52A40}

	\keywords{$L_p$ Brunn-Minkowski inequality, weighted dual quermassintegrals}

 \thanks{Research of Chen was supported by the Postdoctoral Fellowship Program of CPSF under Grant Number GZB20250702.}

	\begin{abstract}
We investigate the $L_p$ Brunn-Minkowski inequality for dual quermassintegrals in weighted measure spaces, which is a special class of rotationally invariant measures proposed by Cordero-Erausquin and Rotem [Ann. Probab., {\bf 51} (2023)]. Specifically, the weighted dual quermassintegral is defined by integrating the radial density $|x|^{q-n}\phi(|x|)$ for $q\in(0,n]$, where $\phi$ is a positive radially non-increasing weight, it recovers the classical dual quermassintegral when $\phi\equiv1$.
For $p\geq1$, we prove the $L_p$ Brunn-Minkowski inequality with concavity exponent $p/q$
 under the condition that 
$t\mapsto\log\phi(e^t)$ is concave, which is exactly the natural convexity condition from Cordero-Erausquin and Rotem's paper in general, improving the exponent $1/n$ when $p=1$. For $p\in(0,1)$, we obtain the result with exponent $p/q$ under more strictly weight assumptions, together with explicit lower bounds for the admissible range of $p$.


	\end{abstract}
	\maketitle	
    \section{introduction}
    As the cornerstone of Brunn-Minkowski theory, Brunn-Minkowski inequality is a sophisticated and powerful tool which can be used to solve many kinds of problems involving metric quantities such as
volume, surface area and mean width. 
The classical Brunn-Minkowski inequality states that for any compact sets $K,L\subseteq \mathbb{R}^n$,
\begin{equation}\label{1}
 V_n((1-\lambda)K+\lambda L)^{\frac{1}{n}}\geq(1-\lambda)V_n(K)^{\frac{1}{n}}+\lambda V_n(L)^{\frac{1}{n}},\quad \forall~\lambda\in [0,1]   
\end{equation}
with equality if and only if $K$ and $L$ are homothetic, where $V_n$ is the $n$-dimensional Lebesgue measure (volume). Due to the homogeneity of the volume, the dimensional Brunn-Minkowski inequality \eqref{1} is equivalent to the following dimension free Brunn-Minkowski inequality
\begin{equation}\label{2}
 V_n((1-\lambda)K+\lambda L)\geq V_n(K)^{1-\lambda}V_n(L)^{1-\lambda},\quad \forall~\lambda\in [0,1].   
\end{equation}
And the above measure can be generalized to $s$-concave measures. We say a measure $\mu$ in $\mathbb{R}^n$ is $s$-concave if for any compact sets $K,L\subseteq \mathbb{R}^n$ with $\mu(K),\mu(L)>0$,
\begin{equation}\label{3}
\mu((1-\lambda)K+\lambda L)^s\geq(1-\lambda)\mu(K)^s+\lambda \mu(L)^s,\quad \forall~\lambda\in [0,1]
\end{equation}
with equality if and only if $K$ and $L$ are homothetic. More details for equality characterization can refer to \cite{MR14}. Similarly, a measure $\mu$ is called log-concave (i.e., $0$-concave) if for any  
compact sets $K,L\subseteq \mathbb{R}^n$,
\begin{equation}\label{4}
\mu((1-\lambda)K+\lambda L)\geq \mu(K)^{1-\lambda}\mu(L)^{1-\lambda},\quad \forall~\lambda\in [0,1].  
\end{equation}
Borell \cite{B75} proved that a measure $\mu$, with non-degenerate support, is log-concave if and only if $\mu$ has the density $\varphi$, where $\varphi$ is a log-concave function in $\mathbb{R}^n$, i.e. $\log\varphi$ is concave. 
Notably, inequality \eqref{2} states that the Lebesgue measure is log-concave, while \eqref{1} means it is also $\frac{1}{n}$-concave. But log-concavity will not imply $s$-concavity for $s>0$ without homogeneity of $\mu$. A typical example is the standard Gaussian measure $\gamma_n$, i.e., the measure with density $(2\pi)^{-\frac{n}{2}}e^{-\frac{|x|^2}{2}}$, whose density function is $\log$-concave. From Borell's result, $\gamma_n$ satisfies \eqref{4}. It is obvious that $\gamma_n$ is non-homogeneous, so \eqref{4} can not deduce \eqref{3} for any $s>0$. 
Hence it is natural to ask that whether \eqref{3} holds ture for Gaussian measure $\gamma_n$. The answer is no, and the counterexample was given by
Gardner-Zvavitch \cite{GZ10}. They further  conjectured that the following
Brunn-Minkowski inequality 
\begin{equation*}
    \gamma_n((1-\lambda)K+\lambda L)^{\frac{1}{n}}\geq(1-\lambda)\gamma_n(K)^{\frac{1}{n}}+\lambda\gamma_n(L)^{\frac{1}{n}},\quad \forall~\lambda\in [0,1]
\end{equation*}
holds when $K,L$ are origin-symmetric convex bodies in $\mathbb{R}^n$. 
While important progress was made by Nayar-Tkocz\cite{NT13} and Kolesnikov-Livshyts \cite{KL21}, this conjecture was affirmatively proved by Eskenazis and Moschidis \cite{EM21}. 
Numerous very recent developments concerning log-concave measures can be referred to  \cite{AL25, CE25, MMRR25, XY26}.
Cordero‑Erausquin and Rotem \cite{CR23} further improved this conjecture for all rotationally invariant measures $\mu\in\mathcal{M}_n$, where $\mathcal{M}_n$ is a class of Borel measures in $\rn$ satisfying
\[
\mathcal{M}_n = 
\left\{
\mu=\int_{K}e^{-\omega(|x|)}dx \;\middle|\;
\begin{aligned}
& \omega: (0,+\infty)\rightarrow(-\infty,+\infty]\ \text{is} \\
& \text{non-decreasing and} ~~t\mapsto \omega(e^t) ~~\text{is convex}
\end{aligned}
\right\}.
\]
It is worth noting that for smooth $\omega$, the condition $t\mapsto \omega(e^t)$ is convex in $\mathcal{M}_n$ is equivalent to $\omega''(t)+\frac{\omega'(t)}{t}\geq0$ for $t>0$, which is weaker than the $\log$-concavity of $\mu$, i.e. $\omega''\geq0$.


When $\mu$ is not the Lebesgue measure, the inequality \eqref{3} is sometimes called weighted Brunn-Minkowski inequality. We refer to the seminal series of excellent works for weighted Brunn-Minkowski inequalities by Fradelizi-Langharst-Madiman-Zvavitch \cite{FLMZ241,FLMZ242}. They systematically developed the weighted Brunn-Minkowski theory, extending fundamental notions including mixed volumes, surface area measures, and core inequalities such as the Minkowski and Fenchel inequalities from the classical convex geometry setting to general measures, notably Gaussian measures. A natural extension is the  $L_p$ version of \eqref{3}, which has the form
\begin{equation}\label{L_p-BMI}
\mu((1-\lambda)K+_p\lambda L)^s\geq(1-\lambda)\mu(K)^s+\lambda \mu(L)^s,\quad \forall~\lambda\in [0,1],    
\end{equation}
for origin-symmetric convex bodies $K,L\subset\rn$. And the related results for Gaussian measures (and beyond) can be found in
\cite{HKL21,MNRMI}. 
Furthermore, within such weighted measure spaces, another fundamental part in Brunn-Minkowski theory, the Minkowski problem, has also been extensively investigated, and detailed discussions can be found in 
\cite{Liv19, HXZ21, Hu25, LT24, KL23, LLT}.

Apart from the above progress on weighted Brunn-Minkowski theory and the Minkowski problem, an important conjecture regarding the Brunn-Minkowski inequality for dual quermassintegrals $\widetilde{V}_q(K):=\frac{q}{n}\int_K|x|^{q-n}dx$, which proposed by Lutwak and discussed extensively in Lutwak-Yang-Zhang's group, states that for convex bodies $K, L\subset\mathbb{R}^n$ and $q>0$, 
\begin{equation}\label{E:12}
    \tilde{V}_q((1-\lambda)K+\lambda L)^{\frac{1}{q}}\geq (1-\lambda)\tilde{V}_q(K)^{\frac{1}{q}}+\lambda\tilde{V}_q(L)^{\frac{1}{q}},\quad \forall~\lambda\in[0,1],
\end{equation}
with equality if and only if $K$ and $L$ are homothetic (see survey paper \cite{HYZ25}). Xi-Zhang \cite{XZ22} proved the case $q\in(0,1]$ geometrically. Subsequently, Sadovsky-Zhang \cite{SZ25}, building on the analytic tools of Kolesnikov-Milman \cite{KM17, KM18}, Kolesnikov-Livshyts \cite{KL21}, and the estimate of Cordero-Erausquin and Rotem \cite{CR23}, confirmed the conjecture for $0<q\leq n$ under symmetry assumptions, though equality characterization remains open.


In this paper, we focus on the $L_p$ Brunn-Minkowski inequality for dual quermassintegrals in some rotationally invariant weighted measure space. The functional $\tilde{V}_{q,\phi}$, which we called the weighted dual quermassintegral, is defined as follows: $$\tilde{V}_{q,\phi}(K):=\int_{K}|x|^{q-n}d\mu(x)=\int_K|x|^{q-n}\phi(|x|)dx,\quad K\subset\mathbb{R}^n$$
for some positive density function $\phi:(0,+\infty)\rightarrow(-\infty,+\infty]$ in $\mathcal{N}^\tau_n$, where
\[
\mathcal{N}_n^\tau = 
\left\{
\phi \in C(\mathbb{R}^n_{+}) \;\middle|\;
\begin{aligned}
& \phi \text{ is non-increasing and}~~t \mapsto \log \phi(e^t)\\
& \text{is strongly }\tau\text{-concave}\ \text{for some}~\tau\geq 0
\end{aligned}
\right\}.
\]
In particular, when $\phi\equiv1\in\mathcal{N}_n^0$, the weighted dual quermassintegral $\tilde{V}_{q,\phi}$ corresponds to $\tilde{V}_{q}$, which can be regarded as a natural generalization of the dual quermassintegral. When $\phi=(2\pi)^{-\frac{n}{2}}e^{-\frac{|x|^2}{2}}\in\mathcal{N}_n^0$, which is the density of Gaussian measure $\gamma_n$, the authors in \cite{FHX25, FLX25} considered the corresponding Minkowski problem for $\tilde{V}_{q,\gamma_n}$.

\begin{remark*}
$\tilde{V}_{q,\phi}$ has the unified form of density function $e^{-\omega(|x|)}$ means that $\omega(t)=(n-q)\log t-\log\phi(t)$. Without loss of generality, we may assume that $\phi$ is $C^2$-smooth by approximation. Fix $q\in(0,n]$ and $\tau\geq0$, the condition $t \mapsto \log \phi(e^t)$ is strongly $\tau$-concave in $\mathcal{N}^{\tau}_n$ means that $\frac{d^2(\log \phi(e^t))}{dt^2} \leq -\tau$, which is equivalent to $\frac{d^2(\omega (e^t))}{dt^2} \geq \tau\geq0$. In a word, for any $\tau\geq0$, if $\phi\in\mathcal{N}^\tau_n$, then $\tilde{V}_{q,\phi}\in\mathcal{M}_n$. In particular, when $\tau=0$, the condition $t \mapsto \log \phi(e^t)$ is $0$-concave (i.e. concave) in $\mathcal{N}^{0}_n$ is actually equivalent to the condition $t\mapsto \omega(e^t)$ is convex in $\mathcal{M}_n$.
\end{remark*}

As previously mentioned, Cordero‑Erausquin and Rotem \cite{CR23} proved \eqref{3} for any measure $\mu\in\mathcal{M}_n$ over origin-symmetric convex bodies with $s=\frac{1}{n}$. Our first contribution is that we proved \eqref{3} for $\tilde{V}_{q,\phi}$ when $\phi\in\mathcal{N}^0_n$, $q\in(0,n]$ and improved the exponent $\frac{1}{n}$ to $\frac{1}{q}$. Actually, we further proved the $L_p$ version \eqref{L_p-BMI} with the exponent $s=\frac{p}{q}$ when $p\geq1, q\in(0,n]$. 

\begin{thm}\label{thm8}
Suppose $p\geq 1, q\in(0,n]$ and $\phi\in\mathcal{N}^0_n$,
then for origin-symmetric convex bodies $K,L\subset\mathbb{R}^n$,
  \begin{equation}\label{BMI2}
 \tilde{V}_{q,\phi}((1-\lambda)K+_p\lambda L)^{\frac{p}{q}}\geq(1-\lambda)\tilde{V}_{q,\phi}(K)^{\frac{p}{q}}+\lambda\tilde{V}_{q,\phi}(L)^{\frac{p}{q}}, \quad \forall~\lambda\in [0,1].   
  \end{equation}
\end{thm}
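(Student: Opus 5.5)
Our plan is to reduce the $L_p$ statement to the case $p=1$ and then prove the $p=1$ case with the local (infinitesimal) approach of Kolesnikov--Milman and Sadovsky--Zhang.

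\emph{Reduction to $p=1$.} For $p\ge1$ and origin-symmetric $K,L$, the support functions satisfy $h_{(1-\lambda)K+_p\lambda L}=((1-\lambda)h_K^p+\lambda h_L^p)^{1/p}\ge(1-\lambda)h_K+\lambda h_L$. Hence $(1-\lambda)K+_p\lambda L\supseteq(1-\lambda)K+\lambda L$ by Minkowski's inequality for $L_p$ norms. We will not use this inclusion naively, however, since it would only give exponent $1/q$ on the right. Instead we use the standard trick from the classical $L_p$ theory. Suppose we know the $1/q$-concavity
\[
\tilde V_{q,\phi}((1-t)K'+tL')^{1/q}\ge(1-t)\tilde V_{q,\phi}(K')^{1/q}+t\tilde V_{q,\phi}(L')^{1/q}
\]
for all origin-symmetric $K',L'$. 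Apply it with $K'=K/a$ and $L'=L/b$, where $a=\tilde V_{q,\phi}(K)^{1/q}$-type normalizers must be chosen carefully, because $\tilde V_{q,\phi}$ is not homogeneous. The homogeneity-free replacement is monotonicity: $\tilde V_{q,\phi}$ increases under inclusion, and $\phi$ is non-increasing, so $s\mapsto \tilde V_{q,\phi}(sK)/s^q$ is non-increasing for $s>0$. With this, the classical argument (choose $t=\lambda b^p/((1-\lambda)a^p+\lambda b^p)$ and use $h_{(1-\lambda)K+_p\lambda L}\ge c\,((1-t)h_{K}/a+t h_L/b)$ with $c=((1-\lambda)a^p+\lambda b^p)^{1/p}$, via Hölder) gives
\[
\tilde V_{q,\phi}((1-\lambda)K+_p\lambda L)\ge \tilde V_{q,\phi}(cM),
\]
where $M$ is a body with $\tilde V_{q,\phi}(M)$ controlled. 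An alternative route is to prove the local inequality directly for general $p$, which may be cleaner. Concretely, we will do the infinitesimal version for each $p$.

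\emph{Local approach.} By the standard reduction (Kolesnikov--Milman, and Sadovsky--Zhang for $\phi\equiv1$), the inequality \eqref{BMI2} for all origin-symmetric bodies follows from a second-variation inequality along $h_t=(h^p+tf^p\ldots)$. Using the continuity method along $K_\lambda$ and smooth approximation, it suffices to prove that $\lambda\mapsto \tilde V_{q,\phi}(K_\lambda)^{p/q}$ is concave for smooth strongly convex symmetric $K$ and even perturbations $h_\lambda=h+\lambda z$ of $h^p$. Writing $\tilde V_{q,\phi}(K)=\int_{S^{n-1}}\int_0^{\rho_K(u)}r^{q-1}\phi(r)\,dr\,du$ and changing to the Gauss parametrization, the second derivative becomes a quadratic form in $z$ involving the Hilbert--Brunn--Minkowski operator $L_K$ with weight $|x|^{q-n}\phi(|x|)$ on $\partial K$. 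The needed inequality is a Poincaré-type estimate for even functions: the first nontrivial eigenvalue bound for this weighted operator, with a correction term from $\omega$. Here $\omega(t)=(n-q)\log t-\log\phi(t)$.

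\emph{Main obstacle.} The key step is the spectral estimate. For $p=1$ and exponent $1/q$ we need, for even $z$,
\[
\int z^2\,d\nu-\frac{q-1}{q}\frac{(\int z\,d\nu)^2}{\nu(\partial K)}\le \int \langle \mathrm{II}^{-1}\nabla z,\nabla z\rangle\,d\nu+\text{(terms from $\omega'$, $\omega''$)}.
\]
We will take the Cordero-Erausquin--Rotem estimate for $\mu\in\mathcal M_n$ (the remark shows $\tilde V_{q,\phi}$ is in $\mathcal M_n$), which gives exponent $1/n$. We then improve it to $1/q$ by splitting $\omega$ as $(n-q)\log t$ plus $-\log\phi$. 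The convexity of $t\mapsto-\log\phi(e^t)$ makes the $\phi$-contributions to the Hessian nonnegative, exactly as in Cordero-Erausquin--Rotem. The $(n-q)\log t$ part is handled as in Sadovsky--Zhang: its extra term $-(n-q)\langle x,\nu\rangle^{-1}$-type contribution is what upgrades $n$ to $q$.

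For $p>1$, the extra term $(p-1)\int z^2/h\,d\nu$ that appears in the second variation has the favorable sign. Combined with Jensen, it yields exponent $p/q$.

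I expect verifying that the $\phi$-terms, together with monotonicity of $\phi$ (which makes $\omega'\ge (n-q)/t$), do not destroy the Sadovsky--Zhang improvement to be the main obstacle.
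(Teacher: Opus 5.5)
Your overall architecture matches the paper's: reduce to the local inequality via the Kolesnikov--Milman Reilly formula, prove a Cordero-Erausquin--Rotem type Hessian bound with constant $\frac1q$ at $p=1$, then pass to $p\ge1$. You are also right to drop the global homogeneity trick; it does not close, because monotonicity of $s\mapsto\tilde{V}_{q,\phi}(sK)/s^q$ controls dilations in one direction only. However, the two steps that carry the content are asserted rather than proved, and the mechanism you describe for the first one is not the one that works.

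\textbf{The exponent $1/q$ at $p=1$.} Let $u$ solve $\Delta u-\langle\nabla W,\nabla u\rangle=1$ in $K$ with $u_\nu=\psi$ on $\partial K$; $u$ is even. What must be shown is
\[
\int_K\|\nabla^2u\|_2^2+\langle\nabla^2W\nabla u,\nabla u\rangle\,d\mu\ \ge\ \frac1q\,\mu(K).
\]
Used as a black box, the Cordero-Erausquin--Rotem estimate gives only $\frac1n\mu(K)$. No after-the-fact splitting $\omega=(n-q)\log t-\log\phi$ can sharpen a bound that is already established, and no boundary term of type $(n-q)\langle x,\nu\rangle^{-1}$ enters. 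The improvement is interior: rerun the argument with the auxiliary quadratic $\frac{|x|^2}{2q}$ in place of $\frac{|x|^2}{2n}$. Apply the weighted Poincar\'e inequality $\int_K\|\nabla^2v\|_2^2\,d\mu\ge\int_K\frac{\omega'(|x|)}{|x|}|\nabla v|^2\,d\mu$ (for even $v$) to $v=u-\frac{|x|^2}{2q}$. Using $\Delta u=1+\langle\nabla W,\nabla u\rangle$, the cross terms $\langle\nabla W,\nabla u\rangle$ cancel and one gets
\[
\int_K\|\nabla^2u\|_2^2\,d\mu\ \ge\ \int_K\frac{\omega'(|x|)}{|x|}|\nabla u|^2+\frac{\omega'(|x|)\,|x|}{q^2}+\frac2q-\frac{n}{q^2}\,d\mu .
\]
The inequality $t\omega'(t)=n-q-t\phi'(t)/\phi(t)\ge n-q$ is exactly where monotonicity of $\phi$ enters, and it makes the constant terms at least $\frac1q$. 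With only $t\omega'\ge0$, the best quadratic gives $\frac1n$. The remaining form $\nabla^2W+\frac{\omega'(|x|)}{|x|}\mathrm{Id}$ has radial eigenvalue $\omega''+\frac{\omega'}{|x|}$ and tangential eigenvalues $\frac{2\omega'}{|x|}$. Both are nonnegative for $\phi\in\mathcal{N}_n^0$, so adding $\int_K\langle\nabla^2W\nabla u,\nabla u\rangle\,d\mu$ gives the claim.

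\textbf{Passage to $p>1$.} The favourable sign of $(p-1)\int_{\partial K}\psi^2/h_K\,d\mu_{\partial K}$ is not enough. Raising $s$ from $\frac1q$ to $\frac pq$ also removes $\frac{p-1}{q\,\mu(K)}\bigl(\int_{\partial K}\psi\,d\mu_{\partial K}\bigr)^2$ from the right-hand side of the local inequality. You therefore need
\[
\mu(K)\int_{\partial K}\frac{\psi^2}{h_K}\,d\mu_{\partial K}\ \ge\ \frac1q\Bigl(\int_{\partial K}\psi\,d\mu_{\partial K}\Bigr)^2 .
\]
Cauchy--Schwarz (your ``Jensen'') reduces this only to $\int_{\partial K}\langle x,\nu\rangle\,d\mu_{\partial K}\le q\,\mu(K)$. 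For the non-homogeneous $\tilde{V}_{q,\phi}$ this is not an identity. By the divergence theorem the left side equals $\int_K\bigl(q+\frac{|x|\phi'(|x|)}{\phi(|x|)}\bigr)\,d\mu$, so it again needs $\phi'\le0$; the paper proves the same inequality in polar coordinates. Relatedly, the normalising quantity in your displayed spectral inequality must be $\mu(K)$, not a boundary mass $\nu(\partial K)$; for $\tilde{V}_{q,\phi}$ the two are not proportional.
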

When $p\in(0,1)$, it will be more complicated to derive the $L_p$ Brunn-Minkowski inequality for $\tilde{V}_{q,\phi}$. Therefore, it is crucial to establish a more precise estimate in Lemma \ref{improve of Cordero Rotem}. In order to deal with the smallest eigenvalues $\lambda_{\min}(M)$ in Lemma \ref{improve of Cordero Rotem}, we carry out a classification on the function $\phi$. The second main result is as follows.
\begin{thm}\label{thm9}
Suppose $p\in(0,1)$, $q\in(0,n]$ and $\tau>0$. Then for origin-symmetric convex bodies $K,L\subset\mathbb{R}^n$, the following inequality 
\begin{equation}\label{BMI1}
 \tilde{V}_{q,\phi}((1-\lambda)K+_p\lambda L)^{\frac{p}{q}}\geq(1-\lambda)\tilde{V}_{q,\phi}(K)^{\frac{p}{q}}+\lambda\tilde{V}_{q,\phi}(L)^{\frac{p}{q}}, \quad \forall~\lambda\in [0,1]
  \end{equation}
  holds under the following two cases:
  \begin{itemize}
  \item $t(\log \phi(t))'-t^2(\log\phi(t))''\geq 2(n-q)$ with $\phi\in\mathcal{N}^{0}_n$, $p\in [1-\frac{2(n-q)}{n^2+n+nq-q},1),$ 
  
  \item $t(\log \phi(t))'-t^2(\log\phi(t))''\leq 2(n-q)$ with $\phi\in\mathcal{N}^{\tau}_n$, $p\in [1-\frac{\tau}{n^2+n+nq-q},1)$.
\end{itemize}
\end{thm}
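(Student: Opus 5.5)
We will use the local (infinitesimal) approach of Kolesnikov--Milman, as adapted by Sadovsky--Zhang for dual quermassintegrals and by Cordero-Erausquin--Rotem for rotationally invariant measures. The first step reduces the $L_p$ Brunn--Minkowski inequality \eqref{BMI1} to a second-variation inequality. Fix an origin-symmetric $C^2_+$ body $K$ (general bodies follow by approximation) and an even perturbation $f\in C^2(S^{n-1})$. Put $h_t=(h_K^p+tf h_K^p)^{1/p}$ and $F(t)=\tilde V_{q,\phi}(K_t)$, where $K_t$ is the Wulff shape of $h_t$. By the standard local-to-global argument, concavity of $t\mapsto F(t)^{p/q}$ at $t=0$ for all such $K,f$ gives \eqref{BMI1}. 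This concavity is equivalent to
\[
F''(0)F(0)\le \Big(1-\frac{p}{q}\Big)F'(0)^2 .
\]
Writing $\psi=fh_K/p$ for the first-order variation of the support function, we will express $F'$ and $F''$ through the weighted surface measure $d\mu_K=|x|^{q-n}\phi(|x|)\,dx$ on $\partial K$. The result is a Hilbert--Brunn--Minkowski type operator. The inequality then becomes a spectral statement: for even $u$ on $\partial K$, a weighted Poincaré inequality with constant depending on $p$ and $q$ and containing an extra term $(1-p)\int \psi^2/h_K\,d\mu$ coming from the $L_p$ sum.

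The second step invokes Lemma \ref{improve of Cordero Rotem}. It is the refined Cordero-Erausquin--Rotem estimate: a Reilly/$L^2$ method, with the Hessian of $\omega(|x|)=(n-q)\log|x|-\log\phi(|x|)$ entering through a matrix $M$. It gives a Poincaré-type inequality for even functions on $K$ whose remainder is controlled by $\lambda_{\min}(M)$. For Theorem \ref{thm8} ($p\ge1$) the $(1-p)$ term has a favourable sign. In that case the estimate with $\lambda_{\min}(M)\ge0$, which holds since $\phi\in\mathcal N^0_n$, should be enough, as in the Sadovsky--Zhang proof with $\phi\equiv1$. For $p<1$ we must absorb the negative contribution $(1-p)\int\psi^2/h_K\,d\mu$, and we will do so by a strictly positive lower bound on $\lambda_{\min}(M)$.

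The third step computes $M$. Its radial eigenvalue involves $\frac{d^2}{ds^2}\omega(e^s)$, that is,
\[
(n-q)\cdot 0 \;-\; t^2(\log\phi)''-t(\log\phi)' \;+\;\text{(terms from } \omega'(t)/t\text{)},
\]
and its tangential eigenvalues involve $\omega'(t)/t=(n-q)/t^2-(\log\phi)'/t$. The eigenvalue $\lambda_{\min}$ is the minimum of these two, and the quantity $t(\log\phi)'-t^2(\log\phi)''$ compared with $2(n-q)$ decides which one is smaller. This is the origin of the two cases in Theorem \ref{thm9}.

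In the first case the tangential-type quantity controls the minimum and we get $\lambda_{\min}\ge 2(n-q)$ after normalization. In the second case the radial term controls it, and strong $\tau$-concavity gives $\lambda_{\min}\ge\tau$. Denote the resulting bound by $c$, so $c=2(n-q)$ or $c=\tau$.

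The last step is to compare. The negative term that must be absorbed has size $(1-p)$ times a constant. This constant comes from bounding $\int\psi^2/h_K$ against the Dirichlet-type term, using $|x|\le$ diameter-scaled quantities and $|\nabla h|$, together with the combinatorial factors $n$, $n+1$ and $q$ from the dimension counts in $F''$. Tracking these gives a requirement of the form
\[
(1-p)(n^2+n+nq-q)\le c,
\]
which is exactly $p\ge 1-c/(n^2+n+nq-q)$. We expect the main obstacle to be this bookkeeping: the algebraic manipulation of $F''$, and especially isolating the precise constant $n^2+n+nq-q=(n+1)(n+q)-n-q+\dots$. It will probably require Cauchy--Schwarz with optimized weights on the cross term $\int \psi\,\langle \nabla\psi, x\rangle$. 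Our first attempt will be to split $F''$ into its Lebesgue-like part, handled exactly as in Sadovsky--Zhang, and its $\phi$-part, handled via Lemma \ref{improve of Cordero Rotem}.
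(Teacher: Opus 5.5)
\textbf{There is a genuine gap.} Your outline has the same skeleton as the paper: the local-to-global reduction via Hosle--Kolesnikov--Livshyts, the Neumann problem $Lu=1$, $u_\nu=\psi$ with the generalized Reilly formula, Lemma~\ref{improve of Cordero Rotem}, and the case split by which eigenvalue of $M$ is smaller. However, the one step that carries the $p<1$ content is deferred to ``bookkeeping'', and the route you sketch for it does not work.

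After the Reilly reduction you must prove \eqref{equivalent after Newmann problem}. There the bad term $(1-p)\int_{\partial K}u_\nu^2/h_K\,d\mu_{\partial K}$ is a \emph{boundary} integral. Lemma~\ref{improve of Cordero Rotem}, by contrast, only produces \emph{interior} quantities $\lambda_{\min}(M)|\nabla u|^2$ and $\|\nabla^2u\|_2^2$. A bound ``$\lambda_{\min}(M)\ge c$'' cannot absorb a boundary term. It is not even scale-correct, since $\lambda_{\min}(M)$ behaves like $|x|^{-2}$. Neither Cauchy--Schwarz on a boundary cross term $\int\psi\langle\nabla\psi,x\rangle$ nor splitting $F''$ into a Lebesgue part and a $\phi$-part gives a mechanism for moving that term into the interior. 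So your final requirement $(1-p)(n^2+n+nq-q)\le c$ is read off the statement rather than derived. (Also, $n^2+n+nq-q=n(n+q)+(n-q)$, not $(n+1)(n+q)-n-q+\dots$.)

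\textbf{What the paper does instead.} It normalizes $rB^n_2\subset K\subset\sqrt n\,rB^n_2$, so that $1/h_K\le\langle x,\nu\rangle/r^2$ and $|x|^2\le nr^2$ on $K$. It bounds $u_\nu^2\le|\nabla u|^2$ and applies the divergence theorem to the field $|\nabla u|^2e^{-W}x$. It then uses $n-\langle x,\nabla W\rangle=q+|x|\phi'(|x|)/\phi(|x|)$ and $2\langle\nabla^2u\nabla u,x\rangle\le\xi\|\nabla^2u\|_2^2+\frac{|x|^2}{\xi}|\nabla u|^2$. This gives \eqref{div}.

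The Hessian term created this way is paid for by the share $\varepsilon\|\nabla^2u\|_2^2$ that Lemma~\ref{improve of Cordero Rotem} holds back. That is the whole point of the factor $1-\varepsilon$ in the lemma, and your proposal never uses it. The paper chooses $\varepsilon=\frac{(1-p)|x|^2}{nr^2}$, $\xi=\frac{|x|^2}{n}$ and $s=\frac pq$. With these, the Hessian coefficient vanishes and $\frac{1-\varepsilon}{q}-\frac pq\ge0$.

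The constant then has two sources:
\begin{itemize}
\item $n(n+q)$ comes from $\frac{|x|^2}{\xi}+q=n+q$, multiplied by the factor $n$ in $|x|^2\le nr^2$.
\item $n-q$ comes from the loss $\varepsilon\,\omega'(|x|)/|x|$ inside $\lambda_{\min}(M)$. So in the first case the eigenvalue to control is $(2-\varepsilon)\omega'(|x|)/|x|$; your claim ``$\lambda_{\min}\ge 2(n-q)$'' ignores this loss.
\end{itemize}
The $\phi'$-terms are discarded using $\phi'\le0$ and $\frac1n-1\le0$.

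If you fill the gap along these lines, note that the normalization $rB^n_2\subset K\subset\sqrt n\,rB^n_2$ is not automatic. John's theorem gives only an ellipsoid, and $\tilde V_{q,\phi}$ is merely rotation invariant, so $K$ cannot be moved into John position for free.
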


This paper is organized as follows. In Section \ref{Share facts}, we provide some important share facts of the $L_p$ Brunn-Minkowski inequality to prepare for the proof of Theorem \ref{thm8} and \ref{thm9}. In Section \ref{pf1}, we derive the $L_p$-Brunn-Minkowski inequality \eqref{BMI2} for the case $p\geq1$ and complete the proof of Theorem \ref{thm8}. In Section \ref{pf2}, we further prove inequality \eqref{BMI1} for the case $p\in(0,1)$ and complete the proof of Theorem \ref{thm9}.

    \section{Share facts}\label{Share facts}\label{section2}
In this section, we show some important share facts of the $L_p$ Brunn-Minkowski inequality, which will be used to prove Theorem \ref{thm8} and \ref{thm9} later. 

For convenience, in the following we abbreviate $L_p$ Brunn-Minkowski inequality as $L_p$-BMI and denote $\mu$ as a Borel measure in $\mathbb{R}^n$ with density function $\frac{d\mu}{dx}:=\frac{d\tilde{V}_{q,\phi}}{dx}=e^{-W}=|x|^{q-n}\phi(|x|)$. Our aim is to prove the following global $L_p$-BMI  
\begin{equation}\label{bmi}
 \mu((1-\lambda)K+_p\lambda L)^s\geq(1-\lambda)\mu(K)^s+\lambda\mu(L)^s,    \quad \forall~\lambda\in [0,1]
\end{equation}
for origin-symmetric convex bodies $K,L\subset\mathbb{R}^n$. In order to prove Theorem \ref{thm8} and \ref{thm9}, the key is deducing the following local $L_p$-BMI
 \begin{equation}\label{local BMI}
        \frac{d^2}{d\epsilon^2}\mu(K+_p\epsilon f)^s|_{\epsilon=0}\leq0, \quad \forall~f^p\in C^2_e(S^{n-1}).
    \end{equation}
to the global $L_p$-BMI \eqref{bmi}. Then we only need to deal with \eqref{local BMI}.
  The calculation of the first and second derivatives of $\mu(K+_p\epsilon f)$ at $\epsilon=0$ has been shown in \cite{HKL21,KM22},
    \begin{equation}
        \label{caluculation of 1 order}
        \frac{d}{d\epsilon}\mu(K+_p\epsilon f)|_{\epsilon=0}=\int_{\partial K}\psi d\mu_{\partial K},
    \end{equation}
    and
    \begin{equation}\label{calculation of 2 order}
        \frac{d^2}{d\epsilon^2}\mu(K+_p\epsilon f)|_{\epsilon=0}=\int_{\partial K}H_\mu\psi^2d\mu_{\partial K}-\int_{\partial K}\langle\mathbb{II}^{-1}\nabla \psi,\nabla\psi\rangle d\mu_{\partial K}+(1-p)\int_{\partial K}\frac{\psi^2}{h_K}d\mu_{\partial K},
    \end{equation}
    where $\mathbb{II}$ is the second fundamental form of $\partial K$, $H_{\mu}=tr(\mathbb{II})-\langle\nabla W, \nu\rangle$ is the weighted mean curvature and $\psi=\frac{f^p}{ph_K^{p-1}}$ for $p\neq0$. 
   
 Combining \eqref{caluculation of 1 order}, \eqref{calculation of 2 order}, \eqref{local BMI} can be reduced to  more detailed expression,
    \begin{align}\label{Lp expression}
        \int_{\partial K}H_\mu\psi^2-\langle\mathbb{II}^{-1}\nabla \psi,\nabla \psi\rangle d\mu_{\partial K}\leq(p-1)\int_{\partial K}\frac{\psi^2}{h_K}d\mu_{\partial K}+\frac{1-s}{\mu(K)}\left(\int_{\partial K}\psi d\mu_{\partial K}\right)^2.
    \end{align}
    
It is worth noting that the equivalence between \eqref{bmi} and \eqref{local BMI} is obvious when $p\geq 1$. But for the case $p\in (0,1)$, the semi-group property 
    $$h_{K_t}=(1-t)h_{K}+_pth_{L}, \quad \forall ~t\in[0,1]$$
   will not hold, 
   hence it may be not straightforward to establish the equivalence of the ``local'' and ``global'' $L_p$-BMI (more details see \cite{KM22, HKL21}). When $p\in (0,1)$, Chen-Huang-Li-Liu \cite{CHLL20} proved the equivalence of the $L_p$-BMI to its local form using PDE techniques when $\mu$ is Lebesgue measure. This result revealed that the local inequality offers a genuine pathway to the global conjecture, rather than merely a technical reduction. Independently, Putterman \cite{P21} established the same equivalence from a geometric standpoint. He verified that, when $s=\frac{p}{n}$, the local $L_p$-BMI \eqref{local BMI} of volume for origin-symmetric convex bodies is equivalent to the local $\frac{p}{n}$-concavity for strongly isomorphic polytopes. The local concavity was then extended to the global concavity, and yields the global $L_p$-BMI for general origin-symmetric convex bodies by approximation. Their work was subsequently extended by Hosle-Kolesnikov-Livshyts \cite{HKL21} to log-concave measure. Specifically, they showed that verifying the local $L_p$-BMI leads to the global $L_p$-BMI, which is stated as follows.
   \begin{thm}( \cite[Theorem 3.1]{HKL21})
       Assume local $L_p$-BMI \eqref{Lp expression} holds for $s=\frac{q}{n}$ and $p,q<1$. Then for any origin-symmetric convex bodies $K,L\subset\mathbb{R}^n$, we have
       \begin{equation*}
           \mu((1-\lambda)K+_p\lambda L)^{\frac{q}{n}}\geq (1-\lambda)\mu(K)^{\frac{q}{n}}+\lambda\mu(L)^{\frac{q}{n}}, \quad \forall~\lambda\in[0,1].
       \end{equation*}
   \end{thm}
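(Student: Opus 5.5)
The plan is to follow the route of Putterman \cite{P21} and Chen--Huang--Li--Liu \cite{CHLL20}, as adapted by Hosle--Kolesnikov--Livshyts \cite{HKL21}. For $p<1$ the expression $h_t=\big((1-t)h_K^p+th_L^p\big)^{1/p}$ need not be a support function. I therefore interpret $(1-t)K+_p tL$ as the Wulff shape (Aleksandrov body) $[h_t]=\{x:\langle x,u\rangle\le h_t(u)\ \forall u\in S^{n-1}\}$. The aim is to show that
\[
F(t)=\mu([h_t])^{s},\qquad s=\tfrac{q}{n},
\]
is concave on $[0,1]$; evaluating concavity at $t=\lambda$ gives the claim. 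Since $\mu$ is continuous under Hausdorff convergence and Wulff shapes depend continuously on their (positive, continuous) generating functions, it suffices to prove this for origin-symmetric polytopes $K,L$ having a common finite, symmetric set of facet normals $u_1,\dots,u_N$. General bodies are then handled by approximation.

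\textbf{Step 1 (finite-dimensional reduction).} For $z\in(0,\infty)^N$ I would set
\[
P(z)=\{x:\langle x,u_i\rangle\le z_i^{1/p}\ \forall i\},\qquad \Phi(z)=\mu(P(z))^{s}.
\]
Along the segment $z(t)=(1-t)z^K+tz^L$, where $z^K_i=h_K(u_i)^p$ and $z^L_i=h_L(u_i)^p$, we have $P(z(t))=[h_t]$ restricted to these normals. So the goal becomes: $\Phi$ is concave on the symmetric part of the cone $(0,\infty)^N$. On the open set $\mathcal U$ where every constraint defines a genuine facet, $P(z)$ stays strongly isomorphic and $\Phi$ is $C^2$. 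Its second derivative in the direction $w=z^L-z^K$ is given by the polytope analogue of \eqref{calculation of 2 order}. Here $\psi$ is piecewise constant on facets, equal to $w_i/(p\,h_i^{p-1})$, and the term $\langle\mathbb{II}^{-1}\nabla\psi,\nabla\psi\rangle$ is replaced by the usual discrete Hessian of mixed $(n-2)$-face terms.

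\textbf{Step 2 (local inequality for polytopes).} Next I would show that the hypothesis \eqref{Lp expression} for smooth bodies implies $\partial_w^2\Phi\le0$ on $\mathcal U$. To do this, approximate $P(z)$ by $C^2_+$ symmetric bodies $K_j$, and approximate the piecewise-constant $\psi$ by smooth even $\psi_j$ (equivalently, $f_j^p\in C_e^2(S^{n-1})$). Then pass to the limit in the first and second variation formulas. The $\mathbb{II}^{-1}$ term concentrates on the $(n-2)$-faces and converges to the discrete quadratic form. This is a standard but delicate limiting argument, and it yields local concavity of $\Phi$ on $\mathcal U$.

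\textbf{Step 3 (from local to global).} This is the step I expect to be the main obstacle: the segment $z(t)$ may leave $\mathcal U$ when some facets disappear. The key observations are these.
\begin{enumerate}
\item $\Phi$ is nondecreasing in each coordinate and continuous.
\item $\Phi(z)=\Phi(\pi(z))$, where $\pi$ lowers every inactive coordinate to the value at which its hyperplane supports $P(z)$. Here $\pi(z)\le z$ coordinatewise, and $\pi(z)$ lies in the closure of $\mathcal U$ (or in $\mathcal U$ for a smaller normal set).
\end{enumerate}
Using these, for $t_0\in(0,1)$ I would produce a concave minorant of $F$ near $t_0$ that touches it at $t_0$. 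Take the segment through $\pi(z(t_0))$ in direction $w$, perturbed slightly into $\mathcal U$. Monotonicity of $\Phi$ gives $F(t)\ge\Phi\big(\pi(z(t_0))+(t-t_0)w\big)$ for $t$ near $t_0$, whenever the right-hand side is defined. The right-hand side is concave by Step 2, and the two sides agree at $t_0$. A continuous function that admits such a supporting concave minorant at every interior point is concave. Induction on the number of facets handles degenerate points where the reduced normal set changes.

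Finally, let the normal sets become dense and take limits using Hausdorff continuity of $\mu$ and of Wulff shapes. This gives concavity of $F$ for general origin-symmetric $K,L$, and hence the stated inequality.
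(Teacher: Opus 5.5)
Your reduction to symmetric polytopes with a common normal set, and to concavity of $\Phi$ along segments, is the Putterman scheme that HKL21 uses; the paper itself does not reprove the statement but defers to that argument. However, Step 3, which is the actual local-to-global step, is backwards.

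Lowering the inactive coordinates and using monotonicity gives $F(t)\ge \Phi(\pi(z(t_0))+(t-t_0)w)$ with equality at $t_0$, i.e. a touching concave \emph{minorant}. The criterion you then invoke is false. Every convex function has a touching linear minorant at each interior point, and $F(t)=|t-\frac12|$ satisfies your hypothesis without being concave. There is a second problem: $\pi(z(t_0))$ generally lies on $\partial\mathcal U$, since the lowered hyperplanes meet $P$ only in lower-dimensional faces. So the segment through it can leave $\mathcal U$ immediately, and pushing it into $\mathcal U$ destroys the equality at $t_0$.

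What you need is a touching concave \emph{majorant}, which you get by deleting, rather than lowering, the inactive constraints.
\begin{itemize}
\item Let $U_0$ be the (symmetric) set of normals of the genuine facets of $P(z(t_0))$, and set $\Phi_{U_0}(y)=\mu(\{x:\langle x,u\rangle\le y_u^{1/p},\ u\in U_0\})^{s}$.
\item Fewer constraints give a larger body, so $F(t)\le G(t):=\Phi_{U_0}(z(t)|_{U_0})$ for all $t$. Equality holds at $t_0$, because a polytope is the intersection of its facet half-spaces.
\item Since $z(t_0)|_{U_0}$ lies in the open set $\mathcal U_{U_0}$, your Step 2 applied to the normal set $U_0$ makes $G$ concave near $t_0$.
\item A continuous $F$ with such a majorant at every interior point is concave. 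Suppose $F-\ell$ had a negative minimum $m$ for some chord $\ell$, and take its leftmost minimizer $t^*$. Then the concave function $G-\ell$ has a local minimum at $t^*$, so it is constant near $t^*$. This forces $F-\ell=m$ just to the left of $t^*$, a contradiction.
\end{itemize}
With this fix, the perturbation and the induction on facets become unnecessary. You only need Step 2 for every symmetric subset of the normals.

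Two lesser points. First, for $n\ge3$ the combinatorial type can change inside $\mathcal U$, so ``$P(z)$ stays strongly isomorphic'' is inaccurate. What you actually need is that $\Phi$ is $C^2$ on all of $\mathcal U$, which holds because its second derivatives are integrals over $(n-2)$-faces that vary continuously. Second, the limiting argument in Step 2, in particular the claimed concentration of the $\mathbb{II}^{-1}$ term on $(n-2)$-faces, is only sketched and would need to be carried out.
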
  
Actually, if the inequality (21) in \cite[Lemma 3.5]{HKL21} is replaced by \eqref{Lp expression}, then we can deduce the global $L_p$-BMI \eqref{bmi}
by the local $L_p$-BMI \eqref{Lp expression} according to the techniques in the proof of \cite[Theorem 3.1]{HKL21}. 

Next, we will focus on the proof of the local $L_p$-BMI \eqref{Lp expression}.   
 Let $u$ be the solution of Neumann boundary problem
    \begin{equation*}
        \left\{
        \begin{aligned}
            &\Delta u-\langle\nabla W,\nabla u\rangle=\frac{\int_{\partial K}\psi d\mu_{\partial K}}{\mu(K)}, &&\mbox{in}~K, \\
             &u_{\nu}=\psi, &&\mbox{on}~\partial K.
        \end{aligned}\right.
    \end{equation*}
 Since \eqref{Lp expression} is scale-invariant about $\psi$, we may assume that 
 \begin{equation}\label{Lu}
   Lu:=\Delta u-\langle\nabla W,\nabla u\rangle=\frac{\int_{\partial K}\psi d\mu_{\partial K}}{\mu(K)}=1.
 \end{equation}
 Then the generalized Reilly formula in \cite{KM18} and Cauchy-Schwarz inequality show that
 \begin{align}\label{Reilly formula}
     \int_{K}(Lu)^2&-\|\nabla^2u\|_2^2-\langle\nabla^2W\nabla u,\nabla u\rangle d\mu_K \geq\int_{\partial K}H_\mu \psi^2-\langle\mathbb{II}^{-1}\nabla \psi,\nabla \psi\rangle d\mu_{\partial K}.
 \end{align}
 Applying \eqref{Reilly formula} in \eqref{Lp expression}, we only need to prove
 \begin{align*}
     \int_{K}(Lu)^2-\|\nabla^2u\|_2^2-\langle\nabla^2W\nabla u,\nabla u\rangle d\mu_K \leq(p-1)\int_{\partial K}\frac{\psi^2}{h_K}d\mu_{\partial K}+\frac{1-s}{\mu(K)}\left(\int_{\partial K}\psi d\mu_{\partial K}\right)^2.
 \end{align*}
 By divergence theorem and \eqref{Lu}, the above inequality is equivalent to
 \begin{align}\label{equivalent after Newmann problem}
     \int_{K}\|\nabla^2u\|_2^2+\langle\nabla^2W\nabla u,\nabla u\rangle d\mu_K\geq(1-p)\int_{\partial K}\frac{u_{\nu}^2}{h_K}d\mu_{\partial K}+s\mu(K).
 \end{align}

In order to deal with the left hand side of inequality \eqref{equivalent after Newmann problem}, we need the following modified estimates, which were inspired by Cordero-Erausquin and Rotem \cite{CR23}.
\begin{lem}\label{improve of Cordero Rotem}
    Let $d\mu_K:=|x|^{q-n}\phi(|x|)dx=e^{-W}dx$ with $q\in (0,n]$,
    $\phi\in C^2(\mathbb{R}^n_{+})$ is a radially symmetric non-increasing function. 
    Assume that $K$ is an origin-symmetric convex body in $\mathbb{R}^n$ and $u:K\rightarrow\mathbb{R}$ is a smooth even function such that
    \begin{equation}\label{weight laplace equation}
        \Delta u-\langle\nabla W,\nabla u\rangle=1\quad\text{in}\ K.
    \end{equation}
Then for some positive function $\varepsilon=\varepsilon(x)\in[0,1)$,
\begin{equation}\label{improve}
    \int_{K}(1-\varepsilon)\|\nabla^2u\|^2_2+\langle\nabla^2W\nabla u,\nabla u\rangle d\mu_K\geq \int_{K}\lambda_{\min}(M)|\nabla u|^2+\frac{1-\varepsilon}{q}d\mu_K,
\end{equation}
    where $\lambda_{\min}(M)$ is the smallest eigenvalue of the matrix $M=\nabla^2W+(1-\varepsilon)\frac{\omega'(|x|)}{|x|}\text{Id}$, $\|\nabla^2u\|^2_2$ is the Hilbert-Schmidt norm of the Hessian of $u$.
\end{lem}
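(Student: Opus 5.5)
The plan is to prove \eqref{improve} from a pointwise matrix inequality applied at each $x\in K$, in the spirit of Cordero-Erausquin and Rotem \cite{CR23}. The function $\varepsilon(x)$ is then the free parameter that makes the pointwise estimate close. Write $r=|x|$ and $W=\omega(r)$ with $\omega(t)=(n-q)\log t-\log\phi(t)$, so that
\[
\nabla W=\frac{\omega'(r)}{r}\,x,\qquad \frac{\omega'(r)}{r}=\frac{n-q}{r^2}-\frac{\phi'(r)}{r\phi(r)}\ \ge\ \frac{n-q}{r^2}\ \ge 0,
\]
since $\phi$ is non-increasing. By \eqref{weight laplace equation},
\[
\Delta u=1+\frac{\omega'(r)}{r}\langle x,\nabla u\rangle .
\]
Because $\lambda_{\min}(M)|\nabla u|^2\le \langle\nabla^2W\nabla u,\nabla u\rangle+(1-\varepsilon)\frac{\omega'}{r}|\nabla u|^2$, it suffices to prove the pointwise-weighted inequality
\[
\int_K(1-\varepsilon)\Big(\|\nabla^2u\|_2^2-\frac{\omega'(r)}{r}|\nabla u|^2\Big)d\mu_K\ \ge\ \int_K\frac{1-\varepsilon}{q}\,d\mu_K .
\]
Strictly this is the reduced inequality with the sign of the $\frac{\omega'}{r}|\nabla u|^2$ term as it appears after moving it across. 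Up to the weight $1-\varepsilon$, this is a "dimension $q$'' Bochner-type lower bound, which is what upgrades the exponent $1/n$ of \cite{CR23} to $1/q$.

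\emph{Step 1 (pointwise algebra).} At each point, split $\nabla^2u$ into its component along the radial direction $\theta=x/r$ and the rest. I would use Cauchy--Schwarz in the weighted form
\[
\|A\|_2^2\ \ge\ \frac{(\operatorname{tr}A-\alpha)^2}{n}\quad\text{or}\quad \|A\|_2^2\ \ge\ \frac{\big(\operatorname{tr}A-\beta\langle A\theta,\theta\rangle\big)^2}{n-\cdots},
\]
choosing the correction so that the $(n-q)/r^2$ part of $\omega'/r$ acts as $n-q$ "missing dimensions''. This is the classical computation showing that $|x|^{q-n}dx$ behaves radially like Lebesgue measure on $\mathbb R^q$. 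Substituting $\operatorname{tr}\nabla^2u=1+\frac{\omega'}{r}\langle x,\nabla u\rangle$ reduces everything to a quadratic polynomial in the scalar $s=\langle x,\nabla u\rangle$ and in $|\nabla u|^2$. The $-\frac{\phi'}{r\phi}$ part of $\omega'/r$ is nonnegative and is kept on the $\nabla^2 W$ side.

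\emph{Step 2 (choice of $\varepsilon$).} A naive minimisation over $s$ shows the pointwise bound alone yields only $1/(n+\omega' r)$, which is weaker than $1/q$. Hence some part of the Hessian energy must be traded away. This is exactly the role of $\varepsilon(x)\in[0,1)$. I would define $\varepsilon(x)$ explicitly in terms of $r$, $\omega'(r)$ and the ratio $\langle x,\nabla u\rangle/(r|\nabla u|)$, as the fraction that completes the square in the quadratic form from Step 1. The expected outcome is
\[
(1-\varepsilon)\|\nabla^2 u\|_2^2\ \ge\ (1-\varepsilon)\Big(\frac1q-\frac{\omega'}{r}|\nabla u|^2\Big)\ +\ (\text{a term linear in } s).
\]

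\emph{Step 3 (evenness and integration by parts).} This is the step I expect to be the main obstacle. The leftover term linear in $s=\langle x,\nabla u\rangle$, together with the inability to close pointwise, must be controlled after integration. Here I would use that $u$ is even and $K$ is origin-symmetric, as in \cite{CR23}. Integrating $\operatorname{div}(e^{-W}u\,x)$ and $\operatorname{div}(e^{-W}\nabla u)$ against suitable radial functions gives identities expressing $\int_K\langle x,\nabla u\rangle\,d\mu_K$ via $\int_K Lu\,(\cdot)\,d\mu_K=\int_K(\cdot)\,d\mu_K$. Symmetry kills the odd contributions, for example by allowing $u$ to be normalised by a constant without changing $\nabla u$. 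The delicate point is that $\varepsilon$ depends on $x$, so these identities must either be applied before $\varepsilon$ is introduced, or $\varepsilon$ must be chosen radial so the weights commute with the integration by parts. I would try the radial choice first and verify that $\varepsilon\in[0,1)$ holds precisely because $\omega'(r)r\ge n-q\ge0$.
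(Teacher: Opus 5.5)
\textbf{There is a genuine gap.} Your reduction to
\[
\int_K(1-\varepsilon)\Big(\|\nabla^2u\|_2^2-\frac{\omega'(|x|)}{|x|}|\nabla u|^2-\frac1q\Big)\,d\mu_K\ \ge\ 0
\]
is correct. However, nothing in Steps 1--3 can prove it, because it is not a pointwise inequality.

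The equation $Lu=1$ links $\nabla^2u$ to $\nabla u$ only through $\operatorname{tr}\nabla^2u=1+\frac{\omega'(|x|)}{|x|}\langle x,\nabla u\rangle$. At a point where $\nabla u\perp x$ this reads $\operatorname{tr}\nabla^2u=1$, while $|\nabla u|$ is unconstrained. So no Cauchy--Schwarz manipulation of the trace, or of $\langle\nabla^2u\,\theta,\theta\rangle$, can produce the term $\frac{\omega'(|x|)}{|x|}|\nabla u|^2$ wherever $\omega'>0$ (which is everywhere if $q<n$). Only when $\omega'\equiv0$, i.e.\ $q=n$ and $\phi$ constant, does $\|\nabla^2u\|_2^2\ge\frac{(\Delta u)^2}{n}$ close pointwise.

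Tuning $\varepsilon$ cannot repair this. The factor $1-\varepsilon>0$ multiplies every term, so $(1-\varepsilon)G\ge0$ pointwise if and only if $G\ge0$. Moreover, an $\varepsilon$ built from $\langle x,\nabla u\rangle/(|x||\nabla u|)$ would make the lemma useless in Section 4. There it is applied with the prescribed radial choice $\varepsilon=\frac{(1-p)|x|^2}{nr^2}$ of \eqref{choose}, with $r$ the John radius.

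Two further problems:
\begin{itemize}
\item Your Step 2 target has the wrong sign on $\frac{\omega'}{|x|}|\nabla u|^2$, so it would not imply the reduced inequality even if you proved it.
\item In Step 3, $\langle x,\nabla u\rangle$ is even (since $\nabla u$ is odd), so origin symmetry does not annihilate the leftover linear terms. The divergence identities you list only exchange them for boundary terms; none of them bounds $\int_K\frac{\omega'}{|x|}|\nabla u|^2\,d\mu_K$ by Hessian energy.
\end{itemize}

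\textbf{The missing ingredient} is the Cordero-Erausquin--Rotem Poincar\'e-type inequality for odd functions. Since $\omega(t)=(n-q)\log t-\log\phi(t)$ is non-decreasing and $K$ is origin-symmetric and convex, every smooth odd $g$ on $K$ satisfies
\[
\int_K|\nabla g|^2\,d\mu_K\ge\int_K\frac{\omega'(|x|)}{|x|}g^2\,d\mu_K .
\]
This is exactly where evenness of $u$ enters. The paper applies it to the odd functions $g=\partial_i(u-\rho)$ with $\rho(x)=\frac{|x|^2}{2q}$, and sums over $i$. Write $\|\nabla^2(u-\rho)\|_2^2=\|\nabla^2u\|_2^2-\frac2q\Delta u+\frac{n}{q^2}$ with $\Delta u=1+\langle\nabla W,\nabla u\rangle$, and expand $\frac{\omega'}{|x|}|\nabla(u-\rho)|^2$. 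The cross terms $\pm\frac2q\langle\nabla W,\nabla u\rangle$ then cancel pointwise, leaving
\[
\int_K\|\nabla^2u\|_2^2\,d\mu_K\ge\int_K\Big(\frac{\omega'}{|x|}|\nabla u|^2+\frac2q-\frac{n}{q^2}+\frac{\omega'(|x|)|x|}{q^2}\Big)d\mu_K .
\]
The constant is at least $\frac1q$ because $\omega'(t)t\ge n-q$.

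Your instinct to take $\varepsilon$ radial is the right way to insert the weight. For radial non-decreasing $\varepsilon$, $(1-\varepsilon)\,d\mu_K=e^{-\tilde\omega(|x|)}dx$ with $\tilde\omega=\omega-\log(1-\varepsilon)$, so $\tilde\omega'\ge\omega'$. Hence the same Poincar\'e inequality holds with the weight $1-\varepsilon$.
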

\begin{rem}
  In fact, the inequality \eqref{improve} is also applicable to dual quermassintegrals, which can be regarded as a special case of the measure in Lemma \ref{improve of Cordero Rotem} when $\phi\equiv 1$. Comparing with the inequality 
$$\int_{K}\|\nabla^2u\|^2_2+\langle\nabla^2W\nabla u,\nabla u\rangle d\mu_K\geq \frac{1}{n}\mu(K),$$
   established by Cordero-Erausquin and Rotem \cite{CR23}, the inequality \eqref{improve} further modified the exponent $\frac{1}{n}$ to $\frac{1}{q}$  by taking $\varepsilon=0$, which can be used to improve the exponent of the $L_p$-BMI for $\tilde{V}_{q,\phi}$.
\end{rem}
\begin{proof}
    Let $W(x)=\omega(|x|)$, $r(x):=\frac{m|x|^2}{n}$, $m$ is a positive constant to be determined later. Since $u$ is even, then $u-r$ is also even, and $\partial_i(u-r)$ is odd. Since $\omega(t)=(n-q)\log t-\log\phi(t)$ and $\phi$ is non-increasing, then $\omega(t)$ is non-decreasing when $q\leq n$, then by the weighted Poincar\'e inequality (see \cite{CR23}), 
    \begin{align*}
        \int_{K}\|\nabla^2(u-r)\|_2^2d\mu_K&\geq\int_{K}\frac{\omega'(|x|)}{|x|}|\nabla(u-r)|^2d\mu_K\\
    &=\int_{K}\frac{\omega'(|x|)}{|x|}\left(|\nabla u|^2-2\langle\nabla u,\nabla r\rangle+|\nabla r|^2\right)d\mu_K\\
        \nonumber &=\int_{K}\frac{\omega'(|x|)}{|x|}\left(|\nabla u|^2-\frac{4m}{n}\langle\nabla u,x\rangle +\frac{4m^2|x|^2}{n^2}\right)d\mu_K\\
    &=\int_{K}\frac{\omega'(|x|)}{|x|}|\nabla u|^2-\frac{4m}{n}\langle\nabla u,\nabla W\rangle +\frac{4m^2}{n^2}\omega'(|x|)|x|d\mu_K.
    \end{align*}
    For an $n\times n$ matrix $A$, denote  the traceless part of $A$ by $\hat{A}=A-\frac{\text{tr(A)}}{n}\text{Id}$, then $\hat{\nabla}^2r=0, \|\hat{\nabla}^2u\|^2_2=\|\nabla^2u\|^2_2-\frac{(\Delta u)^2}{n}$. Since $u$ is a solution of \eqref{weight laplace equation}, then
    \begin{equation}\label{traceless}
    \begin{split}
        \|\nabla^2(u-r)\|^2_2&=\|\hat{\nabla}^2u\|^2_2+\frac{\left(\Delta (u-r)\right)^2}{n}\\
        &=\|\nabla^2u\|^2_2-\frac{(\Delta u)^2}{n}+\frac{(\Delta u-2m)^2}{n}\\
        &=\|\nabla^2u\|^2_2-\frac{4m}{n}(\langle\nabla W,\nabla u\rangle+1)+\frac{4m^2}{n}\\
        &=\|\nabla^2u\|^2_2-\frac{4m}{n}\langle\nabla W,\nabla u\rangle-\frac{4m}{n}+\frac{4m^2}{n}.
        \end{split}
    \end{equation}
    Hence by the fact that $\phi$ is non-increasing and taking $m=\frac{n}{2q}$ in \eqref{traceless}, 
    \begin{align*}
        &\int_{K}(1-\varepsilon)\|\nabla^2u\|^2_2+\langle\nabla^2W\nabla u,\nabla u\rangle d\mu_K\notag\\
        &=\int_{K}(1-\varepsilon)\left(\|\nabla^2(u-r)\|^2_2+\frac{4m}{n}\langle\nabla W,\nabla u\rangle+\frac{4m}{n}-\frac{4m^2}{n}\right)+\langle\nabla^2W\nabla u,\nabla u\rangle d\mu_K\notag\\
        &\geq\int_{K}(1-\varepsilon)\left(\frac{\omega'(|x|)}{|x|}|\nabla u|^2+\frac{4m^2}{n^2}\omega'(|x|)|x|+\frac{4m}{n}-\frac{4m^2}{n}\right)+\langle\nabla^2W\nabla u,\nabla u\rangle d\mu_K\notag\\
&=\int_{K}\left\langle\left(\nabla^2W+(1-\varepsilon)\frac{\omega'(|x|)}{|x|}\text{Id}\right)\nabla u,\nabla u\right\rangle d\mu_K\notag\\
&+\int_{K}(1-\varepsilon)\left[\frac{4m^2}{n^2}\left(n-q-\frac{\phi'(|x|)|x|}{\phi(|x|)}\right)+\frac{4m}{n}-\frac{4m^2}{n}\right]d\mu_K\notag\\
&\geq \int_{K}\left\langle\left(\nabla^2W+(1-\varepsilon)\frac{\omega'(|x|)}{|x|}\text{Id}\right)\nabla u,\nabla u\right\rangle+\frac{1-\varepsilon}{q}d\mu_K\notag\\
&\geq \int_{K}\lambda_{\min}(M)|\nabla u|^2+\frac{1-\varepsilon}{q}d\mu_K,
\end{align*} 
where $M=\nabla^2W+(1-\varepsilon)\frac{\omega'(|x|)}{|x|}\text{Id}$ and $\lambda_{\min}(M)$ is the smallest eigenvalue of the matrix $M$.
   
\end{proof}

\section{Proof of Theorem \ref{thm8}}\label{pf1}
For the $L_p$-BMI of homogeneous functionals (e.g. volume), the case $p\geq1$
 follows immediately from a well-known classical homogeneity argument once the inequality for $p=1$
 is established. This argument, however, breaks down for non-homogeneous functionals.
Hosle-Kolesnikov-Livshyts \cite[Proposition 1.4]{HKL21} proved that the aforementioned property still holds for radially symmetric log-concave measures, even without any homogeneity assumption. Furthermore, they showed that if the $L_p$-BMI holds for some $p>0$, then it also remains valid for any $p'>p$.

For the measure $\tilde{V}_{q,\phi}$ studied in this paper, we cannot directly apply Proposition 1.4 in \cite{HKL21}, essentially because the exponent shifts from $1/n$ to $1/q$. Nevertheless, through an almost line-by-line argument, we derive an analogous result to that in \cite{HKL21}, as presented below.
\begin{lem}\label{lp for p>1}
    Fix $p,q>0$. If $\mu:=\tilde{V}_{q,\phi}$ satisfies the $L_p$-BMI \eqref{bmi} for $s=\frac{p}{q}$ and $\phi$ is non-increasing, then for any $p'\geq p$, \eqref{bmi} still holds for $s=\frac{p'}{q}$. In particular, if 
    \begin{equation*}
        \tilde{V}_{q,\phi}((1-\lambda)K+\lambda L)^{\frac{1}{q}}\geq(1-\lambda)\tilde{V}_{q,\phi}(K)^{\frac{1}{q}}+\lambda\tilde{V}_{q,\phi}(L)^{\frac{1}{q}}, \quad \forall~\lambda\in [0,1], 
    \end{equation*}
    then for $p\geq1$,
    \begin{equation*}
        \tilde{V}_{q,\phi}((1-\lambda)K+_p\lambda L)^{\frac{p}{q}}\geq(1-\lambda)\tilde{V}_{q,\phi}(K)^{\frac{p}{q}}+\lambda\tilde{V}_{q,\phi}(L)^{\frac{p}{q}}, \quad \forall~\lambda\in [0,1]. 
    \end{equation*}
\end{lem}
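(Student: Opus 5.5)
The plan is a rescaling argument in the spirit of \cite[Proposition 1.4]{HKL21}. It combines three ingredients: monotonicity of $L_p$ power means in $p$, a renormalization of $K$ and $L$ that turns a $p'$-combination into a $p$-combination with a new weight $\lambda'$, and a substitute for homogeneity that uses only that $\phi$ is non-increasing.

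\emph{Step 1: scaling substitute for homogeneity.} For $t>0$ and a star body $A$, substituting $x=ty$ gives
\[
\mu(tA)=t^q\int_A|y|^{q-n}\phi(t|y|)\,dy .
\]
Since $\phi$ is non-increasing, $t\mapsto t^{-q}\mu(tA)$ is non-increasing. Hence $\mu(tA)\le t^q\mu(A)$ for $t\ge1$ and $\mu(tA)\ge t^q\mu(A)$ for $t\le1$. Also $t\mapsto\mu(tA)$ is continuous and strictly increasing, from $0$ to $\mu(\mathbb{R}^n)$. This is the only place the monotonicity of $\phi$ enters, and it replaces the exact $q$-homogeneity of $\tilde V_q$.

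\emph{Step 2: renormalizing the $p'$-combination.} For $a,b>0$, set
\[
c=\big((1-\lambda)a^{p'}+\lambda b^{p'}\big)^{1/p'},\qquad \lambda'=\frac{\lambda b^{p'}}{c^{p'}}\in[0,1].
\]
Then pointwise on $S^{n-1}$,
\[
\big((1-\lambda)h_K^{p'}+\lambda h_L^{p'}\big)^{1/p'}=c\big((1-\lambda')h_{K/a}^{p'}+\lambda' h_{L/b}^{p'}\big)^{1/p'}\ge c\big((1-\lambda')h_{K/a}^{p}+\lambda' h_{L/b}^{p}\big)^{1/p},
\]
by monotonicity of power means in the exponent. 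Passing to Wulff shapes yields
\[
(1-\lambda)K+_{p'}\lambda L\supseteq c\,Z,\qquad Z:=(1-\lambda')\tfrac Ka+_p\lambda'\tfrac Lb .
\]

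\emph{Step 3: choosing $a,b$.} I would choose $a,b$ from a common target level $m>0$ by requiring $\mu(K/a)=\mu(L/b)=m$, which is possible by Step 1. The hypothesis ($L_p$-BMI with $s=p/q$) applied to $K/a$ and $L/b$ then gives $\mu(Z)\ge m$, so $\mu((1-\lambda)K+_{p'}\lambda L)\ge\mu(cZ)$. The aim is to compare $\mu(cZ)$ with $c^q$ times a level, and $\mu(K),\mu(L)$ with $a^q m,\,b^q m$, using the inequalities of Step 1. I would take $m$ so that all scaling factors fall on the favorable side. The natural choice is to let $m\to0$ or $m\to\mu(\mathbb{R}^n)$, or equivalently to use the extremal scale of the pair. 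With this comparison one gets
\[
\mu\big((1-\lambda)K+_{p'}\lambda L\big)^{p'/q}\ge(1-\lambda)\mu(K)^{p'/q}+\lambda\mu(L)^{p'/q}.
\]
The special case is then $p=1$, $p'\ge1$.

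\emph{Main obstacle.} Step 3 is the delicate one. The Step 1 inequalities go in opposite directions for dilations $t>1$ and $t<1$, so a naive normalization such as $\mu(K/a)=1$ need not place $c$, $a$, $b$ on compatible sides of $1$. If a single level $m$ does not work, I would argue by contradiction. Assume the $p'$-inequality fails, define $R$ by
\[
R^{p'}=(1-\lambda)\mu(K)^{p'/q}+\lambda\mu(L)^{p'/q},
\]
and choose $a,b$ proportional to $\mu(K)^{1/q}$ and $\mu(L)^{1/q}$ with a common factor tuned via the monotone function $t\mapsto t^{-q}\mu(tA)$. The goal is that the chain of Step 2 produces $\mu\big((1-\lambda)K+_{p'}\lambda L\big)\ge R^q$, contradicting the assumption. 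This mimics the line-by-line argument of \cite{HKL21}, with the exponent $1/n$ replaced by $1/q$.
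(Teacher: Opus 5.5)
Your Steps 1 and 2 are correct, but Step 3 cannot be completed, and no choice of $m$ or of a common factor fixes it.

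To get $\mu(cZ)\ge c^q\mu(Z)$ from Step 1 you need $c\le 1$. To turn the hypothesis for $K/a$, $L/b$ into information about $\mu(K),\mu(L)$ you need $\mu(K/a)\ge a^{-q}\mu(K)$ and $\mu(L/b)\ge b^{-q}\mu(L)$, which Step 1 gives only for $a,b\ge 1$. Since $c=((1-\lambda)a^{p'}+\lambda b^{p'})^{1/p'}\ge\min(a,b)$, the two requirements are compatible only when $a=b=c=1$. In that case you only get $\mu((1-\lambda)K+_{p'}\lambda L)^{1/q}\ge\bigl((1-\lambda)\mu(K)^{p/q}+\lambda\mu(L)^{p/q}\bigr)^{1/p}$, which is weaker than the target because the $p$-mean lies below the $p'$-mean.

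Your proposed limits fail for the same reason. Letting $m\to0$ forces $a,b,c\to\infty$, and letting $m\to\mu(\mathbb{R}^n)$ forces $a,b,c\to0$. The contradiction variant hits the same wall. With $a=\mu(K)^{1/q}/\rho$, $b=\mu(L)^{1/q}/\rho$ and $\rho\le\min\{\mu(K),\mu(L)\}^{1/q}$ you do get $\mu(Z)\ge\rho^q$. But then $c=R/\rho\ge 1$, and Step 1 only yields $\mu(cZ)\ge\mu(Z)\ge\rho^q$, not $R^q$. Taking $\rho$ large instead reverses the inequalities for $K$ and $L$. The rescaling trick needs exact $q$-homogeneity; the one-sided monotonicity of $t^{-q}\mu(tA)$ does not let you compare different scales.

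The missing idea is to use that monotonicity only infinitesimally, at $t=1$. The paper works with the local inequality \eqref{equivalent after Newmann problem}. Under the normalization $Lu=1$, i.e.\ $\int_{\partial K}\psi\,d\mu_{\partial K}=\mu(K)$, replacing $p$ by $p'$ changes the right-hand side by $(p'-p)\bigl(\frac1q\mu(K)-\int_{\partial K}\frac{\psi^2}{h_K}d\mu_{\partial K}\bigr)$. So it suffices to show $\mu(K)\int_{\partial K}\frac{\psi^2}{\langle x,\nu\rangle}d\mu_{\partial K}\ge\frac1q\bigl(\int_{\partial K}\psi\,d\mu_{\partial K}\bigr)^2$.

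This follows from Cauchy--Schwarz together with $q\mu(K)\ge\int_{\partial K}\langle x,\nu\rangle\,d\mu_{\partial K}$. That inequality is exactly your Step 1 differentiated at $t=1$, since $\frac{d}{dt}\mu(tK)\big|_{t=1}=\int_{\partial K}\langle x,\nu\rangle\,d\mu_{\partial K}$ and $\mu(tK)\le t^q\mu(K)$ for $t\ge 1$. The chain is then: global $p$-BMI gives the local inequality at $p$, hence at $p'$, hence the global inequality at $p'$ by the local-to-global principle of Section 2. Your Step 1 is the right ingredient, but it must enter through the second variation, where no comparison between different scales is needed.
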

\begin{proof}
    By the local-to-global argument in section \ref{section2}, it is only needed to discuss the local $L_p$-BMI \eqref{equivalent after Newmann problem} when $s=\frac{p}{q}$. We want to deduce the following inequality
    \begin{equation*}
        \int_{K}\|\nabla^2u\|_2^2+\langle\nabla^2W\nabla u,\nabla u\rangle d\mu_K\geq(1-p')\int_{\partial K}\frac{u_{\nu}^2}{h_K}d\mu_{\partial K}+\frac{p'}{q}\mu(K)
    \end{equation*}
    from $p'=p$ to $p'\geq p$. By a simple argument, it is sufficient to prove
    \begin{equation}
        \label{p+ inequality}
        \mu(K)\int_{\partial K}\frac{\psi^2}{\langle x,\nu\rangle}d\mu_{\partial K}\geq\frac{1}{q}\left(\int_{\partial K}\psi d\mu_{\partial K}\right)^2.
    \end{equation}
    By the calculation of \cite{N,Li1,Li2,Li3}, write
    \begin{align*}
        \mu(K)&=\int_0^1\int_{\partial K}\langle x,\nu\rangle t^{n-1}\cdot\left(|tx|^{q-n}\phi(t|x|)\right)d\mathcal{H}^{n-1}(x)dt\\
        &=\int_0^1\int_{\partial K}\langle x,\nu\rangle t^{q-1}\cdot\left(|x|^{q-n}\phi(t|x|)\right)d\mathcal{H}^{n-1}(x)dt\\
        &\geq\int_0^1\int_{\partial K}\langle x,\nu\rangle t^{q-1}\cdot\left(|x|^{q-n}\phi(|x|)\right)d\mathcal{H}^{n-1}(x)dt\\
        &=\int_{0}^{1}t^{q-1}dt\int_{\partial K}\langle x,\nu\rangle d\mu_{\partial K}=\frac{1}{q}\int_{\partial K}\langle x,\nu \rangle d\mu_{\partial K},
    \end{align*}
    where the inequality is because $\phi$ is non-increasing. Using the Cauchy-Schwarz inequality, 
    \begin{equation*}
        \int_{\partial K}\langle x,\nu\rangle d\mu_{\partial K}\int_{\partial K}\frac{\psi^2}{\langle x,\nu\rangle}d\mu_{\partial K}\geq\left(\int_{\partial K}\psi d\mu_{\partial K}\right)^2,
    \end{equation*}
    which implies \eqref{p+ inequality}.
\end{proof}

Based on the preparation in section \ref{section2} and lemma \ref{lp for p>1}, the key of proving Theorem \ref{thm8} is to obtain the inequality \eqref{equivalent after Newmann problem} for the case $p=1$. Since the right hand side of \eqref{equivalent after Newmann problem}
$$(1-p)\int_{\partial K}\frac{u_{\nu}^2}{h_K}d\mu_{\partial K}+s\mu(K)= s\mu(K)$$
when $p=1$. Next we mainly focus on dealing with the left hand side of \eqref{equivalent after Newmann problem}. 
According to $\phi\in\mathcal{N}^0_n$, we derive $\phi(t)$ is non-increasing and $t\mapsto\log\phi(e^t)$ is concave, then $(\log\phi(t))''+\frac{(\log\phi(t))'}{t}\leq 0$, which implies $\omega(t)$ is non-decreasing and $\omega''(t)+\frac{\omega'(t)}{t}\geq 0$. Then by taking $\varepsilon=0$ in Lemma \ref{improve of Cordero Rotem},
\begin{align*}
&\int_{K}\|\nabla^2u\|^2_2+\langle\nabla^2W\nabla u,\nabla u\rangle d\mu_K\notag\\
&\geq \int_{K}\left\langle\left(\nabla^2W+\frac{\omega'(|x|)}{|x|}\text{Id}\right)\nabla u,\nabla u\right\rangle+\frac{1}{q}d\mu_K\notag\\
&\geq \frac{1}{q}\mu(K),
\end{align*}
where the last inequality comes from
 \begin{align*}
\nabla^2W+\frac{\omega'(|x|)}{|x|}\text{Id}&=\omega''(|x|)\frac{x}{|x|}\otimes\frac{x}{|x|}+\frac{\omega'(|x|)}{|x|}\left(\text{Id}-\frac{x}{|x|}\otimes\frac{x}{|x|}\right)+\frac{\omega'(|x|)}{|x|}\text{Id}\\
&=\left(\omega''(|x|)+\frac{\omega'(|x|)}{|x|}\right)\frac{x}{|x|}\otimes\frac{x}{|x|}
+\frac{2\omega'(|x|^2)}{|x|^2}\left(\text{Id}-\frac{x}{|x|}\otimes\frac{x}{|x|}\right)\\
&\geq\left(\omega''(|x|)+\frac{\omega'(|x|)}{|x|}\right)\frac{x}{|x|}\otimes\frac{x}{|x|}\geq0.
\end{align*}  
Hence, by choosing $s=\frac{1}{q}$, it is easy to see that \eqref{equivalent after Newmann problem} holds for the case $p= 1, q\in(0,n]$. Combining lemma \ref{lp for p>1}, Theorem \ref{thm8} is proved. 

\section{Proof of Theorem \ref{thm9}}\label{pf2}

In this section, we mainly consider the case $p<1$, which will be more complicated to prove \eqref{equivalent after Newmann problem}, and the key is dealing with the term $\int_{\partial K}\frac{u_{\nu}^2}{h_K}d\mu_{\partial K}$. 

By John's lemma, we get $rB^n_2\subset K\subset \sqrt{n}rB^n_2$ for some $r>0$. Then combining with the definition of $\omega(t)$, divergence theorem and Cauchy-Schwarz inequality, we get for any $\xi>0$,
\begin{equation}\label{div}
\begin{split}
  \int_{\partial K}\frac{u_{\nu}^2}{h_K}d\mu_{\partial K} &\leq \frac{1}{r^2}\int_{\partial K}|\nabla u|^2\langle x,\nu \rangle d\mu_{\partial K}\\
  &= \frac{1}{r^2}\int_K div(|\nabla u|^2e^{-W}x)dx\\
  &\leq \frac{1}{r^2}\int_K\left(\xi\|\nabla^2u\|^2_2+\left(n+\frac{|x|^2}{\xi}-\langle x, \nabla W\rangle\right)|\nabla u|^2\right)d\mu_K\\
  &= \frac{1}{r^2}\int_K\left(\xi\|\nabla^2u\|^2_2+\left(\frac{|x|^2}{\xi}+q+\frac{\phi'(|x|)|x|}{\phi(|x|)}\right)|\nabla u|^2\right)d\mu_K.  
\end{split}
\end{equation}

Combining with \eqref{div} and Lemma \ref{improve of Cordero Rotem}, \eqref{equivalent after Newmann problem} can be reduced to 
\begin{equation}\label{712}
\begin{split}
  &\int_K \left[\underbrace{\lambda_{\min}(M)-\frac{1-p}{r^2}\left(\frac{|x|^2}{\xi}+q+\frac{\phi'(|x|)|x|}{\phi(|x|)}\right)}_{:=A}\right]|\nabla u|^2d\mu_K\\
  &+\int_K\left(\underbrace{\varepsilon-\frac{(1-p)\xi}{r^2}}_{:=B}\right)\|\nabla^2u\|^2_2d\mu_K+\int_K\left(\underbrace{\frac{1-\varepsilon}{q}-s}_{:=C}\right)d\mu_K\geq 0.
\end{split}
\end{equation}
Hence we only need to prove
$$A\geq 0,~B\geq 0~\mbox{and}~C\geq 0.$$
According to $rB^n_2\subset K\subset \sqrt{n}rB^n_2$, $|x|^2\leq nr^2$ for any $x\in K$. Thus we derive $B\geq 0,C\geq 0$ by choosing 
\begin{equation}\label{choose}
   \varepsilon=\frac{1-p}{nr^2}|x|^2, \xi=\frac{|x|^2}{n}, s=\frac{p}{q}. 
\end{equation}
In order to prove $A\geq 0$, it is crucial to deal with the smallest eigenvalue of $M$. Since $M=\nabla^2W+(1-\varepsilon)\frac{\omega'(|x|)}{|x|}\text{Id}$, it is easy to deduce that the eigenvalues of $M$
  $$\lambda(M)=\left(\underbrace{(2-\varepsilon)\frac{\omega'(|x|)}{|x|},\cdots,(2-\varepsilon)\frac{\omega'(|x|)}{|x|}}_{n-1},\omega''(|x|)+(1-\varepsilon)\frac{\omega'(|x|)}{|x|}\right).$$
Next we divide into two cases to discuss the smallest eigenvalue $\lambda_{\min}(M)$.

\textbf{Case 1:} 
$\lambda_{\min}(M)=(2-\varepsilon)\frac{\omega'(|x|)}{|x|}$, i.e. $t(\log \phi)'-t^2(\log\phi)''\geq 2(n-q)$.

According to $rB^n_2\subset K\subset \sqrt{n}rB^n_2$ and $\phi$ is non-increasing, then 
\begin{eqnarray*}
  A&\geq&\left(2-\frac{1-p}{nr^2}|x|^2\right)\left(\frac{n-q}{|x|^2}-\frac{\phi'(|x|)}{\phi(|x|)|x|}\right)-\frac{1-p}{r^2}\left(n+q+\frac{\phi'(|x|)}{\phi(|x|)}|x|\right)\\
  &=&\frac{2(n-q)}{|x|^2}-\frac{1-p}{nr^2}(n^2+n+nq-q)-\frac{2\phi'(|x|)}{\phi(|x|)|x|}+\left(\frac{1-p}{nr^2}-\frac{1-p}{r^2}\right)\frac{\phi'(|x|)}{\phi(|x|)}|x| \\
  &\geq& \frac{1}{r^2}\left(\frac{2(n-q)}{n}-\frac{1-p}{n}(n^2+n+nq-q)\right),
\end{eqnarray*}
by taking $\varepsilon, \xi$ as in \eqref{choose}. Hence $A\geq0$ is equivalent to choosing
$$1-\frac{2(n-q)}{n^2+n+nq-q}\leq p<1.$$
In this case, \eqref{BMI1} is proved for $p\in [1-\frac{2(n-q)}{n^2+n+nq-q},1), q\in(0,n]$.

\textbf{Case 2:} 
$\lambda_{\min}(M)=\omega''(|x|)+(1-\varepsilon)\frac{\omega'(|x|)}{|x|}$, i.e. $t(\log \phi)'-t^2(\log\phi)''\leq 2(n-q)$.

Since $\phi\in\mathcal{N}^{\tau}_n$ for $\tau>0$, then $-(\log\phi(t))''-\frac{(\log\phi(t))'}{t}\geq \frac{\tau}{t^2}$, i.e. $\omega''(t)+\frac{\omega'(t)}{t}\geq \frac{\tau}{t^2}$. Thus  
$$\lambda_{\min}(M)=\omega''(|x|)+(1-\varepsilon)\frac{\omega'(|x|)}{|x|}\geq \frac{\tau}{|x|^2}-\varepsilon\frac{\omega'(|x|)}{|x|}.$$
Combining with $rB^n_2\subset K\subset \sqrt{n}rB^n_2$ and $\phi$ is non-increasing, it implies that 
\begin{eqnarray*}
  A&\geq&\frac{\tau}{|x|^2}-\frac{1-p}{nr^2}\left(n-q-\frac{\phi'(|x|)}{\phi(|x|)}|x|\right)-\frac{1-p}{r^2}\left(n+q+\frac{\phi'(|x|)}{\phi(|x|)}|x|\right)\\
  &\geq&\frac{\tau}{nr^2}-\frac{(1-p)(n-q)}{nr^2}-\frac{(1-p)(n+q)}{r^2}+\left(\frac{1-p}{nr^2}-\frac{1-p}{r^2}\right)\frac{\phi'(|x|)}{\phi(|x|)}|x| \\
  &\geq& \frac{\tau}{nr^2}-\frac{1-p}{nr^2}(n^2+n+nq-q), 
\end{eqnarray*}
by taking $\varepsilon, \xi$ as in \eqref{choose}.
Thus $A\geq0$ is equivalent to choosing
$$1-\frac{\tau}{n^2+n+nq-q}\leq p<1.$$
Hence \eqref{BMI1} is proved for $p\in [1-\frac{\tau}{n^2+n+nq-q},1), q\in(0,n]$ in this case. The proof of Theorem \ref{thm9} is completed.

\section*{Acknowledgement}
The authors would like to express sincere gratitude to professor Yong Huang for his constant guidance.

\end{document}